\numberwithin{equation}{section}
\newcommand{\ourtitle}{Quantitative Photoacoustic Imaging in the Acoustic Regime using SPIM}
\title{\ourtitle}
\author{%
Alexander Beigl$^2$\\{\footnotesize\href{mailto:alexander.beigl@oeaw.ac.at}{alexander.beigl@oeaw.ac.at}}%
\and Peter Elbau$^1$\\{\footnotesize\href{mailto:peter.elbau@univie.ac.at}{peter.elbau@univie.ac.at}}%
\and Kamran Sadiq$^2$\\{\footnotesize\href{mailto:kamran.sadiq@oeaw.ac.at}{kamran.sadiq@oeaw.ac.at}}%
\and Otmar Scherzer$^{1,2}$\\{\footnotesize\href{mailto:otmar.scherzer@univie.ac.at}{otmar.scherzer@univie.ac.at}}%
}
\newtheorem{lemma}{Lemma}[section]
\newaliascnt{proposition}{lemma}
\newtheorem{proposition}[proposition]{Proposition}
\newaliascnt{corollary}{lemma}
\newtheorem{corollary}[corollary]{Corollary}
\newaliascnt{theorem}{lemma}
\newaliascnt{definition}{lemma}
\newaliascnt{assumption}{lemma}
\newaliascnt{hypothesis}{lemma}
\newaliascnt{remark}{lemma}
\newaliascnt{example}{lemma}
\newaliascnt{convention}{lemma}
\theoremstyle{nonumberplain}
\newtheorem{proof}{Proof}
\titleformat{\section}[block]{\large\sc\filcenter}{\thesection.}{0.5ex}{}[]
\titleformat{\subsection}[runin]{\bf}{\thesubsection.}{0.5ex}{}[.]
\newcommand{\R}{\mathds{R}}
\renewcommand{\d}{\mathrm d}
\let\RE\Re
\let\Re=\undefined
\DeclareMathOperator{\Re}{\RE e}
\let\IM\Im
\let\Im=\undefined
\DeclareMathOperator{\Im}{\IM m}
\DeclareMathOperator{\supp}{supp}
\DeclareMathOperator{\Div}{div}
\DeclareMathOperator{\interior}{int}
\begin{document}

%%%%%%%%%%%%%%%%%%%%%%%%%%%%%%
%%% Titlepage
%%%%%%%%%%%%%%%%%%%%%%%%%%%%%%
\maketitle
\vspace*{-2em}
\begin{center}
\parbox[t]{0.4\textwidth}{\footnotesize
\begin{tabbing}
$^1$\=Computational Science Center\\
\>University of Vienna\\
\>Oskar-Morgenstern-Platz 1\\
\>A-1090 Vienna, Austria
\end{tabbing}}
\hfil
\parbox[t]{0.4\textwidth}{\footnotesize
\begin{tabbing}
$^2$\=Johann Radon Institute for Computational\\
\>\hspace*{1em}and Applied Mathematics (RICAM)\\
  \>Altenbergerstra{\ss}e 69\\
\>A-4040 Linz, Austria
\end{tabbing}}
\end{center}
\vspace*{2em}
%%% End

\begin{abstract}
While in standard photoacoustic imaging the propagation of sound waves is modeled by the standard wave 
equation, our approach is based on a generalized wave equation with variable sound speed and material density, 
respectively. In this paper we present an approach for photoacoustic imaging, which in addition to recovering 
of the absorption density parameter, the imaging parameter of standard photoacoustics, also allows to reconstruct
the spatially varying sound speed and density, respectively, of the medium. 
We provide analytical reconstruction formulas for all three parameters based in a linearized model based on single 
plane illumination microscopy (SPIM) techniques. 
\end{abstract}

\section{Introduction}
Photoacoustic imaging (PAI) is a novel imaging technique which uses pulsed laser excitation in the 
visible or infrared frequency regime to illuminate a specimen and measures the acoustic response of 
the medium (see \cite{Wan09} and some mathematical survey references \cite{KucKun08,WanAna11,KucSch15}).
Mathematical models of standard PAI do not take into account variable sound speed and elastic 
material parameters of the medium, such as compressibility and mass density \cite{Wan09}.
More sophisticated models take into account spatially varying sound speed, but assume the sound speed to be known 
from measurements of other modalities (like for instance elastography and ultrasound 
experiments). PAI which takes into account given spatially varying sound speed and elasticity parameters 
has been considered in \cite{AgrKuc07,HriKucNgu08,Hri09,SteUhl09a,SteUhl11,QiaSteUhlZha11,KucSte12,BelGlaSch16a,BelGlaSch16b}.
Little is known, how and when it is (at least) theoretically possible, to recover in addition to the photoacoustic 
imaging parameter, the absorption density, also sound speed and elastic 
parameters. \cite{KirSch12} provided a method for parallel estimation of the sound speed and the photoacoustic 
imaging parameter via \emph{focusing to planes}, which is called \emph{single plane illumination microscopy} 
for a Born-approximation of the wave equation. This paper is the starting point 
for the present one, which shows that in addition to the sound speed, also elastic parameters can be recovered. 
Parallel sound speed and photoacoustic imaging parameter estimation has been considered as a nonlinear inverse problem 
(without making the assumption of a Born-approximation) in \cite{SteUhl13a,SteUhl13c,LiuUhl15}.

The main goal of this paper is to perform quantitative imaging in the acoustic regime: The photoacoustic imaging 
process consists of three processes, the acoustic wave propagation, the optical illumination and the visco-elastic 
part to transform optical energy into acoustic waves. Thus our topic might not be confused with quantitative imaging 
in the optical illumination part, where optical parameters of the medium are identified 
(see e.g. \cite{BalUhl10,BalRenUhlZho11,BalUhl12,BalRen11a,BalRen12,BalBonMonTri13,BalZho14}).
%\section{Photoacoustic Single Plane Illumination Microscopy}

In the following we describe the proposed experimental setup.
We consider the specimen to be photoacoustically imaged to be supported in a bounded domain $\Omega_0\subset\R^3$ 
embedded into a known environment. To analyse the specimen's interior structure via photoacoustic imaging, it is 
illuminated with a laser pulse. This produces a local light fluence $\Phi_{r,\theta}:\R^3\to[0,\infty)$ 
(integrated over the short time interval 
of the pulse), where we want to assume that the laser can be tuned in such a way that the function $\Phi_{r,\theta}$ 
is mainly supported in a small vicinity of the plane
\[ E_{r,\theta} = \{x\in\R^3\mid x\cdot\theta = r\} \]
for some parameters $r\in[0,\infty)$ and $\theta\in \mathbf{S}^2$. Ideally, we are illuminating only single planes. 

According to the photoacoustic effect, the object will absorb at each point parts of the light depending on the 
spatially varying absorption coefficient $\mu:\R^3\to[0,\infty)$ and transform the absorbed energy first into heat, 
which is then transformed proportional to the Grüneisen parameter $\gamma:\R^3\to[0,\infty)$ into a locally varying 
pressure distribution
\[ P^{(0)}_{r,\theta}(x) = \gamma(x)\mu(x)\Phi_{r,\theta}(x). \]
To avoid getting signals from everywhere, we want to assume that the absorption is confined to some bounded domain 
$\Omega\supset\overline{\Omega_0}$, that is, $\supp\mu\subset\Omega$.

This initial pressure distribution $P^{(0)}$ will then propagate as an elastic wave through the medium. To describe 
this wave, we model the object as an elastic body with smooth bulk modulus $K:\R^3\rightarrow[0,\infty)$ and with a 
vanishing shear modulus. 
Then we know from linear elasticity theory that the stress tensor 
$\sigma_{r,\theta}\in C^2([0,\infty)\times\R^3;\R^{3\times3})$ as a function of time and space has the simple form
\[ \sigma_{r,\theta;ij}(t,x) = K(x) \Div_xu_{r,\theta}(t,x)\delta_{ij}, \]
where $u\in C^3([0,\infty)\times\R^3;\R^3)$ denotes the displacement vector of the material as a function of time 
and space. Introducing the mass density $\rho\in C^1(\R^3)$ of the material, this leads to the equation of motion:
\begin{equation}\label{eqSetDisplacementMotion}
\rho(x) \partial_{tt}u_{r,\theta;i}(t,x) = \sum_{j=1}^3 \partial_{x_j}\sigma_{r,\theta;ij}(t,x) = \partial_{x_i}(K(x)\Div_xu_{r,\theta}(t,x)).
\end{equation}
Moreover, the relation between the displacement vector $u_{r,\theta}$ and the pressure distribution $P_{r,\theta}\in C^2([0,\infty)\times\R^3)$ is given by
\begin{equation}\label{eqSetPressureDisplacement}
P_{r,\theta}(t,x) = -K(x)\Div_x u_{r,\theta}(t,x).
\end{equation}

To rewrite the equation of motion in terms of the pressure, we take the divergence of \eqref{eqSetDisplacementMotion} and obtain
\[ \rho(x)\partial_{tt}\Div_x u_{r,\theta}(t,x)+\nabla\rho(x)\cdot\partial_{tt}u_{r,\theta}(t,x) = \Delta_x(K(x)\Div_xu_{r,\theta}(t,x)). \]
Using again the equation \eqref{eqSetDisplacementMotion}, we can rewrite the term $\partial_{tt}u_{r,\theta}$ on the left hand side as an expression containing $u_{r,\theta}$ only in the form of $\Div_xu_{r,\theta}$ and find with the relation \eqref{eqSetPressureDisplacement} the equation of motion for the pressure $P_{r,\theta}$:
\[ \frac{\rho(x)}{K(x)}\partial_{tt}P_{r,\theta}(t,x)+\frac{\nabla\rho(x)}{\rho(x)}\cdot\nabla_x P_{r,\theta}(t,x) = \Delta_x P_{r,\theta}(t,x). \]
Introducing the parameter $\alpha(x)=\frac{K(x)}{\rho(x)}$ corresponding to the square of the speed of sound, this can be written in the form
\begin{subequations}\label{eqSetPressureIVP}
\begin{gather}
\partial_{tt}P_{r,\theta}(t,x) = \rho(x)\alpha(x)\Div_x\left(\frac1{\rho(x)}\nabla_xP_{r,\theta}(t,x)\right). \label{eqSetPressureMotion} \\
\intertext{We complement this equation with the initial conditions that at $t=0$, the pressure distribution equals the one generated by the photoacoustic effect and that there is no initial motion (this corresponds to the assumption that the absorption and the transformation of heat energy into pressure happens instantaneously):}
\begin{aligned}
\partial_tP_{r,\theta}(0,x)&=0, \\
P_{r,\theta}(0,x)&=P^{(0)}_{r,\theta}(x).
\end{aligned}
\end{gather}
\end{subequations}

Our aim is to reconstruct the product $\gamma\mu$ of absorption coefficient and Grüneisen parameter, the speed of 
sound $\sqrt\alpha$, and the mass density $\rho$ from measurements of this acoustic wave $P_{r,\theta}$. 
To accomplish this, we assume that we use the measurements
\[ m_{r,\theta}(t,\xi) = P_{r,\theta}(t,\xi)\]
for all points $\xi\in\partial\Omega$, all times $t\in[0,\infty)$, and all choices of illumination planes, 
that is, for all $r\in[0,\infty)$ and $\theta\in \mathbf{S}^2$.

In \autoref{sec:FOA} we formulate the forward problem, taking into account small perturbations of the acoustic 
parameters. We consider an asymptotic expansion of the solution of the acoustic wave equation induced by the 
deviation of sound speed and density around known constant functions. Proposition \ref{thFirSolWave} combines the 
zeroth and first order term of the expansion as an integral operator 
acting on the initial acoustic pressure wave. The kernel of this operator is of utmost importance as it contains 
information about the seeked acoustic parameters. 
In \autoref{sec:kernel} we exploit asymptotic behaviors of this kernel which will be the
main tools for the final reconstruction of the parameters and the source. In the final two \autoref{sec:dp} and \autoref{sec:reconstruction} 
we consider the idealized forward problem where we are omitting the higher order terms of the expansion derived in \autoref{sec:FOA}, which is reasonable
when the distortions of the acoustic parameters become sufficiently small. The SPIM method then allows us to extract the product of the kernel and the source by applying the inverse Radon transform. By combining various measurement data, we are able to reconstruct the source and the distortions of the sound speed and density. 

\section{First Order Approximation}
\label{sec:FOA}

To simplify the problem, we will only consider the equation of motion \eqref{eqSetPressureMotion} in the case where $\rho$ and $\alpha$ are small perturbations of constant functions. To formulate this more precisely, let us introduce an artificial parameter $\varepsilon>0$ and assume that we have a family of media with parameters
%\begin{align}
%\rho(x;\varepsilon) &= \rho_0+\varepsilon\rho_1(x)+\varepsilon^2\rho_2(x;\varepsilon), \label{eqFirMassDensity}\\
%\alpha(x;\varepsilon) &= \alpha_0+\varepsilon \alpha_1(x)+\varepsilon^2\alpha_2(x;\varepsilon) \label{eqFirSpeed}
%\end{align}
\begin{align}
\rho(x;\varepsilon) &= \rho_0+\varepsilon\rho_1(x), \label{eqFirMassDensity}\\
\alpha(x;\varepsilon) &= \alpha_0+\varepsilon \alpha_1(x), \label{eqFirSpeed}
\end{align}
with $0<\rho_{\varepsilon,\text{min}}\leq\rho(x;\varepsilon)\leq\rho_{\varepsilon,\text{max}}$ and $0<\alpha_{\varepsilon,\text{min}}\leq\alpha(x;\varepsilon)\leq\alpha_{\varepsilon,\text{max}}$.  The perturbations will be considered sufficiently smooth and supported in the domain of absorption, that is, $\supp(\rho_0-\rho)\subset\Omega$ and $\supp(\alpha_0-\alpha)\subset\Omega$.

Then, we can also expand the solution $P_{r,\theta}(\cdot,\cdot;\varepsilon)$ of the initial value problem \eqref{eqSetPressureIVP} in $\varepsilon$:
\[ P_{r,\theta}(t,x;\varepsilon) = P_{r,\theta,0}(t,x)+\varepsilon P_{r,\theta,1}(t,x)+\varepsilon^2 P_{r,\theta,2}(t,x;\varepsilon), \]
where the zeroth and first order terms fulfil
\begin{subequations}\label{eqFirP0}
\begin{align}
\partial_{tt}P_{r,\theta,0}(t,x) &= \alpha_0 \Delta P_{r,\theta,0}(t,x), \\
\partial_t P_{r,\theta,0}(0,x) &= 0, \\
P_{r,\theta,0}(0,x) &= P_{r,\theta}^{(0)}(x),
\end{align}
\end{subequations}
and
\begin{subequations}\label{eqFirP1}
\begin{align}
\partial_{tt}P_{r,\theta,1}(t,x) &= \alpha_0 \Delta P_{r,\theta,1}(t,x)+\alpha_1(x)\Delta P_{r,\theta,0}(t,x) - \frac{\alpha_0}{\rho_0}\nabla\rho_1(x)\cdot\nabla P_{r,\theta,0}(t,x), \\
\partial_t P_{r,\theta,1}(0,x) &= 0, \\
P_{r,\theta,1}(0,x) &= 0.
\end{align}
\end{subequations}

The higher order term satisfies the equation
\begin{subequations}\label{eqFirP2}
\begin{align}
\partial_{tt}P_{r,\theta,2}(t,x;\varepsilon) &= \rho(x;\varepsilon)\alpha(x;\varepsilon)\Div_x\left(\frac1{\rho(x;\varepsilon)}\nabla_xP_{r,\theta,2}(t,x;\varepsilon)\right) + F(t,x;\varepsilon), \\
\partial_t P_{r,\theta,2}(0,x;\varepsilon) &= 0, \\
P_{r,\theta,2}(0,x;\varepsilon) &= 0,
\end{align}
\end{subequations}
where
\[
F(t,x;\varepsilon)=\frac{1}{\rho(x;\varepsilon)}\frac{\rho_1(x)\alpha_0-\rho_0\alpha_1(x)}{\rho_0}\nabla\rho_1(x)\cdot\nabla P_{r,\theta,0}(t,x) + \alpha_1(x)\Delta P_{r,\theta,1}(t,x)-\frac{\alpha(x;\varepsilon)}{\rho(x;\varepsilon)}\nabla\rho_1(x)\cdot \nabla P_{r,\theta,1}(t,x) 
\]
sheds light on the coupling with the zeroth and first order expansion. The parameters $\rho_0$ and $\alpha_0$ are assumed to be known and we normalise them to $\rho_0=1$ and $\alpha_0=1$ henceforth.
The initial value problems \eqref{eqFirP0} and \eqref{eqFirP1} can be explicitly solved, see for example \cite[Section 2.4, Theorem 2]{Eva98}, which gives us the following representation for $P_{r,\theta}$.
\begin{proposition}\label{thFirSolWave}
%Let $\alpha_1\in C^2(\R^3)$, $\rho_1\in C^3(\R^3)$, and $P_{r,\theta}^{(0)}\in C^5(\R^3)$.
Let $\alpha_1$, $\rho_1$ and $P_{r,\theta}^{(0)}$ be smooth functions. Then, the solution $P_{r,\theta}(\cdot,\cdot;\varepsilon)$ of the initial value problem \eqref{eqSetPressureIVP} with the parameters $\rho$ and $\alpha$ given by \eqref{eqFirMassDensity} and \eqref{eqFirSpeed} has the form
\[ P_{r,\theta}(t,x;\varepsilon) = \frac{\partial^4}{\partial t^4}\left(\int_{B_t(x)}K(t,x,y;\varepsilon)P_{r,\theta}^{(0)}(y)\d y\right)+\varepsilon^2 P_{r,\theta,2}(t,x;\varepsilon), \]
where the kernel $K$ is given by
\begin{equation}\label{eqFirSolWaveKernel}
\begin{split}
K(t,x,y;\varepsilon) &= \frac{(t-|y-x|)^2}{8\pi|y-x|}(1-\varepsilon \alpha_1(y))+\int_{\mathscr E_t(x,y)}\frac1{16\pi^2|z-x||z-y|}\varepsilon \alpha_1(z)\d z \\
&\qquad+\int_{\mathscr E_t(x,y)}\frac{t-|z-x|-|z-y|}{16\pi^2 |z-x||z-y|}\varepsilon\nabla\rho_1(z)\cdot\frac{z-y}{|z-y|}\d z \\
&\qquad+\int_{\mathscr E_t(x,y)}\frac{(t-|z-x|-|z-y|)^2}{32\pi^2 |z-x||z-y|^2}\varepsilon\nabla\rho_1(z)\cdot\frac{z-y}{|z-y|}\d z,
\end{split}
\end{equation}
where
\begin{equation}\label{eqFirSolWaveEllipsoid}
\mathscr E_t(x,y) = \{z\in\R^3\mid |z-x|+|z-y|\leq t\}
\end{equation}
is the prolate spheroid with focal points $x$ and $y$ and larger semi-axis $\frac t2$, and $P_{r,\theta,2}$ remains bounded,
\begin{equation}
 \sup_{0\leq t\leq T}\left(\|P_{r,\theta,2}(t,\cdot;\varepsilon)\|_{H^{k+1}(\mathbb{R}^3)} + \|\partial_t P_{r,\theta,2}(t,\cdot;\varepsilon)\|_{H^k(\mathbb{R}^3)}\right)
 \leq C_{k,T;\varepsilon}\int_0^T\|F(t,\cdot;\varepsilon)\|_{H^k(\mathbb{R}^3)}\d t
\end{equation}
with $C_{k,T;\varepsilon}=\mathcal{O}(1)$ as $\varepsilon\rightarrow0$.

\end{proposition}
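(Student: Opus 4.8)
The plan is to solve the constant-coefficient problems \eqref{eqFirP0} and \eqref{eqFirP1} explicitly (recall the normalisation $\rho_0=\alpha_0=1$), to read off the kernel \eqref{eqFirSolWaveKernel} from the resulting iterated integrals, and to bound $P_{r,\theta,2}$ by a hyperbolic energy estimate. For the zeroth order I would start from Kirchhoff's formula $P_{r,\theta,0}(t,x)=\partial_t\bigl(\tfrac1{4\pi t}\int_{\partial B_t(x)}P_{r,\theta}^{(0)}\,\d S\bigr)$. Writing $y=x+s\omega$ and $\Psi(s)=\int_{|\omega|=1}P_{r,\theta}^{(0)}(x+s\omega)\,\d\omega$, the volume integral $\int_{B_t(x)}\frac{(t-|y-x|)^2}{8\pi|y-x|}P_{r,\theta}^{(0)}(y)\,\d y$ becomes $\frac1{8\pi}\int_0^t(t-s)^2 s\,\Psi(s)\,\d s$; differentiating four times in $t$ recovers $P_{r,\theta,0}$, the boundary terms from the first two differentiations vanishing thanks to the factors $(t-s)^2$ and $(t-s)$. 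This reproduces the leading kernel term at $\varepsilon=0$.

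For the first order I would use Duhamel's principle, so that $P_{r,\theta,1}$ is the retarded potential $\frac1{4\pi}\int_{B_t(x)}\frac{G(t-|z-x|,z)}{|z-x|}\,\d z$ of the source $G=\alpha_1\Delta P_{r,\theta,0}-\nabla\rho_1\cdot\nabla P_{r,\theta,0}$, where $\Delta P_{r,\theta,0}=\partial_{tt}P_{r,\theta,0}$. Substituting the spherical-mean representation of $P_{r,\theta,0}$ and its derivatives, the inner mean over $\partial B_{t-|z-x|}(z)$ forces its integration point $y$ (the argument of $P_{r,\theta}^{(0)}$) onto $|z-x|+|z-y|=t$. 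Interchanging the order of integration so that $y$ is the outer variable turns the $z$-domain into the prolate spheroid \eqref{eqFirSolWaveEllipsoid}, and writing the retarded potential as $\frac{\partial^4}{\partial t^4}$ of a volume integral produces the powers of $t-|z-x|-|z-y|$ in \eqref{eqFirSolWaveKernel}. For the density term I would integrate the radial derivative carried by $\nabla_z P_{r,\theta,0}$ by parts, which yields the factor $\nabla\rho_1(z)\cdot\frac{z-y}{|z-y|}$; collecting contributions and matching powers of $\varepsilon$ gives the four terms of $K$, including the correction $-\varepsilon\alpha_1(y)$ on the leading term.

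For $P_{r,\theta,2}$ I would run the standard energy estimate for the variable-coefficient equation \eqref{eqFirP2}: testing with $\partial_t P_{r,\theta,2}$, integrating by parts in space, and using that $\rho(\cdot;\varepsilon)$ and $\alpha(\cdot;\varepsilon)$ are uniformly bounded and uniformly elliptic for small $\varepsilon$ controls the energy, hence $\|P_{r,\theta,2}\|_{H^1}+\|\partial_t P_{r,\theta,2}\|_{L^2}$, by $\int_0^T\|F\|_{L^2}\,\d t$ via Grönwall. Differentiating the equation and commuting derivatives through the smooth-coefficient spatial operator, then iterating, upgrades this to the stated $H^{k+1}$/$H^k$ bound in terms of $\int_0^T\|F\|_{H^k}\,\d t$. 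Because the coefficients tend to $\rho_0,\alpha_0$ with all derivatives governed by the fixed $\rho_1,\alpha_1$, the ellipticity and coefficient constants are uniform in $\varepsilon$, so $C_{k,T;\varepsilon}=\mathcal O(1)$.

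I expect the main difficulty to be the bookkeeping in the second step: tracking the nested time derivatives and retarded times while interchanging the spherical-mean and retarded-potential integrations, so that exactly the stated powers of $t-|z-x|-|z-y|$ and the normalising constants $8\pi$, $16\pi^2$, $32\pi^2$ emerge, and correctly handling the gradient source through the integration by parts that converts $\nabla P_{r,\theta,0}$ into the radial factor $\frac{z-y}{|z-y|}$.
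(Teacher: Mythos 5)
Your proposal is correct and follows essentially the same route as the paper: Kirchhoff's formula for the zeroth order, Duhamel's principle with the divergence theorem converting $\nabla P_{r,\theta,0}$ into the radial factor $\frac{z-y}{|z-y|}$ for the first order, reorganization of the iterated retarded integrals under $\frac{\partial^4}{\partial t^4}$ to produce the powers of $t-|z-x|-|z-y|$ over the spheroid, and a uniform-in-$\varepsilon$ hyperbolic energy estimate for $P_{r,\theta,2}$. The only cosmetic differences are that you verify the zeroth-order kernel by differentiating four times rather than integrating Kirchhoff's formula, and you prove the energy bound directly where the paper cites H\"ormander's Lemma 23.2.1, which is that same standard estimate.
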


\begin{proof}
According to \cite[Section 2.4, Theorem 2]{Eva98}, the solution $P_{r,\theta,0}$ of the initial value problem \eqref{eqFirP0} is given by
\begin{equation}\label{eqFirSolWaveP0}
P_{r,\theta,0}(t,x) = \frac{\partial}{\partial t}\left(\frac1{4\pi t}\int_{\partial B_t(x)}P_{r,\theta}^{(0)}(y)\d s(y)\right) = \frac{\partial^2}{\partial t^2}\int_{B_t(x)}\frac{P_{r,\theta}^{(0)}(y)}{4\pi |y-x|}\d y,
\end{equation}
where we used the coarea formula to write the surface integral over the sphere as the derivative of the integral over the ball.

Similarly, the solution $P_{r,\theta,1}$ of \eqref{eqFirP1} is explicitly known, see for example \cite[Section 2.4, Theorem 4]{Eva98}, and we have
\begin{align}
P_{r,\theta,1}(t,x) &= \int_0^t\frac1{4\pi(t-\tau)}\int_{\partial B_{t-\tau}(x)}\left(\alpha_1(y)\Delta P_{r,\theta,0}(\tau,y)-\nabla\rho_1(y)\cdot\nabla P_{r,\theta,0}(\tau,y)\right)\d s(y)\d\tau \nonumber \\
&= \int_{B_t(x)}\frac1{4\pi|y-x|}\left(\alpha_1(y)\Delta P_{r,\theta,0}(t-|y-x|,y)-\nabla\rho_1(y)\cdot\nabla P_{r,\theta,0}(t-|y-x|,y)\right)\d y. \label{eqFirSolWaveP1General}
\end{align}

To write this as an integral operator on $P_{r,\theta}^{(0)}$, we integrate by parts to pull all derivatives acting on $P_{r,\theta,0}$ to the other terms. 
\begin{itemize}
\item
For the first term, we use that $P_{r,\theta,0}$ is a solution of \eqref{eqFirP0} and get
\begin{align}
\int_{B_t(x)}&\frac{\alpha_1(y)\Delta P_{r,\theta,0}(t-|y-x|,y)}{4\pi|y-x|}\d y \nonumber \\
&= \frac\partial{\partial t}\left(\int_{B_t(x)}\frac{\alpha_1(y)\partial_tP_{r,\theta,0}(t-|y-x|,y)}{4\pi|y-x|}\d y\right) \nonumber \\
&= \frac{\partial^2}{\partial t^2}\left(\int_{B_t(x)}\frac{\alpha_1(y)P_{r,\theta,0}(t-|y-x|,y)}{4\pi|y-x|}\d y\right)-\frac\partial{\partial t}\left(\int_{\partial B_t(x)}\frac{\alpha_1(y)P_{r,\theta}^{(0)}(y)}{4\pi|y-x|}\d s(y)\right). \label{eqFirSolWaveP1Term1}
\end{align}
To reduce $P_{r,\theta,0}$ in the first summand to a term involving only the initial data $P_{r,\theta}^{(0)}$, we write, according to~\eqref{eqFirSolWaveP0}, 
\begin{equation}\label{eqFirSolWaveTildeP0}
P_{r,\theta,0}=\partial_{tt}\tilde P_{r,\theta,0}\quad\text{with}\quad\tilde P_{r,\theta,0}(t,x)=\int_0^t\int_0^\tau P_{r,\theta,0}(\tau_1,x)\d\tau_1\d\tau=\int_{B_t(x)}\frac{P_{r,\theta}^{(0)}(y)}{4\pi |y-x|}\d y.
\end{equation}
Then, we pull the two time derivatives outside the integral and find because of $\partial_t\tilde P_{r,\theta,0}(0,x)=\tilde P_{r,\theta,0}(0,x)=0$ for the first integral in \eqref{eqFirSolWaveP1Term1} that
\begin{equation}\label{eqFirSolWaveP1Term1a}
\int_{B_t(x)}\frac{\alpha_1(y)P_{r,\theta,0}(t-|y-x|,y)}{4\pi|y-x|}\d y = \frac{\partial^2}{\partial t^2}\left(\int_{B_t(x)}\frac{\alpha_1(y)\tilde P_{r,\theta,0}(t-|y-x|,y)}{4\pi|y-x|}\d y\right).
\end{equation}
\item
For the second term in \eqref{eqFirSolWaveP1General}, we write $P_{r,\theta,0}$ in the form \eqref{eqFirSolWaveTildeP0} and pull the time derivatives out of the integral to get
\begin{multline}\label{eqFirSolWaveP1Term2}
\int_{B_t(x)}\frac{\nabla\rho_1(y)\cdot\nabla P_{r,\theta,0}(t-|y-x|,y)}{4\pi|y-x|}\d y \\
= \frac{\partial^2}{\partial t^2}\left(\int_{B_t(x)}\frac1{16\pi^2|y-x|}\int_{B_{t-|y-x|}(y)}\nabla\rho_1(y)\cdot\frac{\nabla P_{r,\theta}^{(0)}(z)}{|z-y|}\d z\d y\right).
\end{multline}
Applying the divergence theorem to the inner integral, we obtain that
\begin{multline}\label{eqFirSolWaveP1Term2a}
\int_{B_{t-|y-x|}(y)}\nabla\rho_1(y)\cdot\frac{\nabla P_{r,\theta}^{(0)}(z)}{|z-y|}\d z \\
= \int_{\partial B_{t-|y-x|}(y)}\nabla\rho_1(y)\cdot\frac{z-y}{|z-y|}\frac{P_{r,\theta}^{(0)}(z)}{|z-y|}\d s(z)+\int_{B_{t-|y-x|}(y)}\nabla\rho_1(y)\cdot\frac{z-y}{|z-y|^3}P_{r,\theta}^{(0)}(z)\d z.
\end{multline}
\end{itemize}
We now insert \eqref{eqFirSolWaveTildeP0} into \eqref{eqFirSolWaveP1Term1a} and this into \eqref{eqFirSolWaveP1Term1}. Moreover, we combine \eqref{eqFirSolWaveP1Term2a} with \eqref{eqFirSolWaveP1Term2}. The results, we then put into \eqref{eqFirSolWaveP1General} and obtain the expression
\begin{equation}\label{eqFirSolWaveP1}
\begin{split}
P_{r,\theta,1}(t,x) &= \frac{\partial^4}{\partial t^4}\left(\int_{B_t(x)}\int_{B_{t-|y-x|}(y)}\frac{\alpha_1(y)P_{r,\theta}^{(0)}(z)}{16\pi^2|y-x||z-y|}\d z\d y\right)-\frac\partial{\partial t}\left(\int_{\partial B_t(x)}\frac{\alpha_1(y)P_{r,\theta}^{(0)}(y)}{4\pi|y-x|}\d s(y)\right) \\
&\qquad-\frac{\partial^2}{\partial t^2}\left(\int_{B_t(x)}\int_{\partial B_{t-|y-x|}(y)}\frac1{16\pi^2|y-x||z-y|}\nabla\rho_1(y)\cdot\frac{z-y}{|z-y|}P_{r,\theta}^{(0)}(z)\d s(z)\d y\right) \\
&\qquad-\frac{\partial^2}{\partial t^2}\left(\int_{B_t(x)}\int_{B_{t-|y-x|}(y)}\frac1{16\pi^2|y-x||z-y|^2}\nabla\rho_1(y)\cdot\frac{z-y}{|z-y|}P_{r,\theta}^{(0)}(z)\d z\d y\right).
\end{split}
\end{equation}

Combining the formulas \eqref{eqFirSolWaveP0} and \eqref{eqFirSolWaveP1} and replacing the surface integrals again with the coarea formula with time derivatives of integrals over the corresponding balls, we find that
\begin{equation}\label{eqFirSolWaveP0P1}
\begin{split}
P_{r,\theta,0}(t,x)+\varepsilon P_{r,\theta,1}(t,x) &= \frac{\partial^2}{\partial t^2}\left(\int_{B_t(x)}\frac{(1-\varepsilon \alpha_1(z))P_{r,\theta}^{(0)}(z)}{4\pi|z-x|}\d z\right) \\
&\qquad+\frac{\partial^4}{\partial t^4}\left(\int_{B_t(x)}\int_{B_{t-|y-x|}(y)}\frac{\varepsilon \alpha_1(y)P_{r,\theta}^{(0)}(z)}{16\pi^2|y-x||z-y|}\d z\d y\right) \\
&\qquad-\frac{\partial^3}{\partial t^3}\left(\int_{B_t(x)}\int_{B_{t-|y-x|}(y)}\frac1{16\pi^2|y-x||z-y|}\varepsilon\nabla\rho_1(y)\cdot\frac{z-y}{|z-y|}P_{r,\theta}^{(0)}(z)\d z\d y\right) \\
&\qquad-\frac{\partial^2}{\partial t^2}\left(\int_{B_t(x)}\int_{B_{t-|y-x|}(y)}\frac1{16\pi^2|y-x||z-y|^2}\varepsilon\nabla\rho_1(y)\cdot\frac{z-y}{|z-y|}P_{r,\theta}^{(0)}(z)\d z\d y\right).
\end{split}
\end{equation}
Remarking that we can explicitly calculate the primitive functions with respect to time of these integrals, for example:
\begin{align*}
\int_0^t\int_{B_\tau(x)}\int_{B_{\tau-|y-x|}(y)}F(x,y,z)\d z\d y\d\tau &= \int_{\R^3}\int_{\R^3}F(x,y,z)\int_0^t\chi_{[0,\infty)}(\tau-|y-x|-|z-y|)\d\tau\d z\d y \\
&= \int_{B_t(x)}\int_{B_{t-|y-x|}(y)}(t-|y-x|-|z-y|)F(x,y,z)\d z\d y,
\end{align*}
we integrate the result four times with respect to time to get rid of the time derivatives and then interchange the order of integration. This yields the expression
\[ P_{r,\theta,0}(t,x)+\varepsilon P_{r,\theta,1}(t,x) = \frac{\partial^4}{\partial t^4}\left(\int_{B_t(x)}K(t,x,z;\varepsilon)P_{r,\theta}^{(0)}(z)\d z\right) \]
with the kernel $K$ given by \eqref{eqFirSolWaveKernel}.

%\commentP{Maybe we should interchange $y$ and $z$ in the calculation.}

%\commentP{Boundedness of $P_{r,\theta,2}$.}
For the estimate of the higher order term we refer to \cite[Section 23.2, Lemma 23.2.1]{Hoe07}. Note that the coefficients of the homogeneous part of equation \eqref{eqFirP2} depend continuously on $\varepsilon$. In the limit case $\varepsilon=0$, equation \eqref{eqFirP2} results in the non-homogeneous wave equation with constant sound speed.
\end{proof}

\section{Properties of the Kernel}
\label{sec:kernel}
We want to study in this section the kernel $K$ defined in \eqref{eqFirSolWaveKernel}. In particular, we are interested in the behaviour of $K(t,x,y;\varepsilon)$ when $t$ gets much larger than the support of the involved functions $\alpha_1$ and $\rho_1$ and in the limit when $t$ approaches the distance $|y-x|$ from above (the kernel vanishes for $t<|y-x|$).

For large values of $t$, the change in the domain $\mathscr E_t(x,y)$ as $t$ varies does not change the value of the integral and $K$ behaves as a quadratic polynomial in $t$. Explicitly, we get with this argument the following expansion in $t$.
\begin{lemma}\label{thKerLargeTime}
Let $\alpha_1\in C_{\mathrm c}(\R^3)$, $\rho_1\in C_{\mathrm c}^1(\R^3)$, and $K$ be defined by \eqref{eqFirSolWaveKernel}. For arbitrary values $x,y\in\R^3$, let $T_{x,y}>0$ be chosen so that $\mathscr E_{T_{x,y}}(x,y)\supset\supp\rho_1\cup\supp \alpha_1$.

Then, we have for all $t\ge T_{x,y}$ the relation
\begin{equation}\label{eqKerLargeTime}
\begin{split}
K(t,x,y;\varepsilon) &= \left(\frac{1-\varepsilon \alpha_1(y)}{8\pi|y-x|}+\int_{\R^3}\frac1{32\pi^2|z-x||z-y|^2}\varepsilon\nabla\rho_1(z)\cdot\frac{z-y}{|z-y|}\d z\right)t^2 \\
&\qquad-\frac{1-\varepsilon(\alpha_1(y)+\rho_1(y))}{4\pi}\,t+\frac{|y-x|(1-\varepsilon \alpha_1(y))}{8\pi} \\
&\qquad+\int_{\R^3}\frac1{16\pi^2|z-x||z-y|}\varepsilon \alpha_1(z)\d z+\int_{\R^3}\frac{|z-x|^2-|z-y|^2}{32\pi^2|z-x||z-y|^2}\varepsilon\nabla\rho_1(z)\cdot\frac{z-y}{|z-y|}\d z.
\end{split}
\end{equation}
\end{lemma}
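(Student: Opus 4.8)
The plan is to exploit the hypothesis $t\ge T_{x,y}$, under which the prolate spheroid $\mathscr E_t(x,y)$ already engulfs both $\supp\alpha_1$ and $\supp\rho_1$. Since every $\alpha_1$- and $\nabla\rho_1$-dependent integrand in \eqref{eqFirSolWaveKernel} vanishes outside these supports, I would first replace each domain $\mathscr E_t(x,y)$ by all of $\R^3$. This removes the only $t$-dependence hidden inside the domains of integration, leaving explicit polynomial $t$-dependence in the numerators alone; in particular the integrals over $\R^3$ are now genuinely constant in $t$.

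Next I would expand each of the four summands of \eqref{eqFirSolWaveKernel} as a polynomial in $t$. The first summand $\frac{(t-|y-x|)^2}{8\pi|y-x|}(1-\varepsilon\alpha_1(y))$ and the fourth, carrying $(t-|z-x|-|z-y|)^2$, contribute to $t^2$, $t$, and $t^0$; the third, linear in $(t-|z-x|-|z-y|)$, contributes to $t$ and $t^0$; the second is purely constant. Collecting the coefficient of $t^2$ is immediate and reproduces the bracketed expression in \eqref{eqKerLargeTime}. Writing $a=|z-x|$ and $b=|z-y|$, the constant-in-$t$ $\rho_1$-terms from the third and fourth summands combine after the elementary simplification $-\frac{2(a+b)}{ab}+\frac{(a+b)^2}{ab^2}=\frac{a^2-b^2}{ab^2}$ into the stated $|z-x|^2-|z-y|^2$ numerator.

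The crux lies in the coefficient of $t$. After collecting the contributions from the first, third, and fourth summands, the two $\nabla\rho_1$-integrals merge via the identity $\frac1{ab}-\frac{a+b}{ab^2}=-\frac1{b^2}$ into the single integral $-\frac{\varepsilon}{16\pi^2}\int_{\R^3}\frac{\nabla\rho_1(z)\cdot(z-y)}{|z-y|^3}\d z$. The decisive step is then to establish
\[ \int_{\R^3}\frac{\nabla\rho_1(z)\cdot(z-y)}{|z-y|^3}\d z = -4\pi\rho_1(y), \]
which I would prove by writing $\frac{z-y}{|z-y|^3}=-\nabla_z\frac1{|z-y|}$, integrating by parts (the boundary term vanishing by compact support of $\rho_1$), and invoking $\Delta_z\frac1{|z-y|}=-4\pi\delta(z-y)$, the fundamental solution of the Laplacian. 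This yields the pointwise value $-4\pi\rho_1(y)$, so the merged $\rho_1$-integral equals $\frac{\varepsilon\rho_1(y)}{4\pi}$, precisely the contribution that upgrades the naive linear coefficient $-\frac{1-\varepsilon\alpha_1(y)}{4\pi}$ to the claimed $-\frac{1-\varepsilon(\alpha_1(y)+\rho_1(y))}{4\pi}$.

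The main obstacle is this last identity: one must treat the singularity of $\frac1{|z-y|}$ at $z=y$ with care when integrating by parts, which is cleanest either distributionally or by excising a small ball $B_\delta(y)$ and passing to the limit $\delta\to0$ (the surface contribution on $\partial B_\delta(y)$ vanishing like $\delta$). Everything else reduces to bookkeeping of polynomial coefficients and the two elementary fraction identities above.
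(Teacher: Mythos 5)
Your proposal is correct and follows essentially the same route as the paper: replace $\mathscr E_t(x,y)$ by $\R^3$ for $t\ge T_{x,y}$, collect the coefficients of the polynomial in $t$, and reduce the linear coefficient via the identity $\int_{\R^3}\nabla\rho_1(z)\cdot\frac{z-y}{|z-y|^3}\,\d z=-4\pi\rho_1(y)$, which the paper likewise derives from the divergence theorem and the fundamental solution of $-\Delta$ (the paper moves both derivatives onto $\rho_1$, writing the integral as $\frac1{4\pi}\int_{\R^3}\frac{\Delta\rho_1(z)}{|z-y|}\,\d z$, whereas you put the Laplacian on the Newtonian kernel as $-4\pi\delta(z-y)$ --- an immaterial difference, and your variant actually matches the stated hypothesis $\rho_1\in C_{\mathrm c}^1(\R^3)$ more faithfully). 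One small correction to your closing remark: in the excision rigorization the surface integral over $\partial B_\delta(y)$ does \emph{not} vanish as $\delta\to0$ --- since $\frac1{|z-y|}$ is harmonic away from $y$, that surface term is precisely what converges to $-4\pi\rho_1(y)$; what vanishes like $\delta$ is the contribution of the ball $B_\delta(y)$ to the original, absolutely convergent integral --- though this does not affect your primary (distributional) argument, which is sound.
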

\begin{proof}
We replace in \eqref{eqFirSolWaveKernel} the integration over $\mathscr E_t(x,y)$ with the integral over $\R^3$, which does not change the value of the integral, since we have $\supp\rho_1\cup\supp \alpha_1\subset\mathscr E_{T_{x,y}}(x,y)\subset\mathscr E_t(x,y)$, and reorder the terms as coefficients of a polynomial in $t$. This yields
\begin{equation}\label{eqKerLargeTimePol}
\begin{split}
K(t,x,y;\varepsilon) &= \left(\frac{1-\varepsilon \alpha_1(y)}{8\pi|y-x|}+\int_{\R^3}\frac1{32\pi^2|z-x||z-y|^2}\varepsilon\nabla\rho_1(z)\cdot\frac{z-y}{|z-y|}\d z\right)t^2 \\
&\qquad-\left(\frac{1-\varepsilon \alpha_1(y)}{4\pi}+\int_{\R^3}\frac1{16\pi^2|z-y|^2}\varepsilon\nabla\rho_1(z)\cdot\frac{z-y}{|z-y|}\d z\right)t \\
&\qquad+\frac{|y-x|(1-\varepsilon \alpha_1(y))}{8\pi}+\int_{\R^3}\frac1{16\pi^2|z-x||z-y|}\varepsilon \alpha_1(z)\d z \\
&\qquad+\int_{\R^3}\frac{|z-x|^2-|z-y|^2}{32\pi^2|z-x||z-y|^2}\varepsilon\nabla\rho_1(z)\cdot\frac{z-y}{|z-y|}\d z.
\end{split}
\end{equation}

To simplify the coefficient of the linear term, we use the divergence theorem and that $\frac1{4\pi|x|}$ is the fundamental solution for the negative Laplace opertor $-\Delta$ and find
\[ \frac\varepsilon{16\pi^2}\int_{\R^3}\nabla\rho_1(z)\cdot\frac{z-y}{|z-y|^3}\d z = \frac\varepsilon{4\pi}\int_{\R^3}\frac{\Delta\rho_1(z)}{4\pi|z-y|}\d z = -\frac\varepsilon{4\pi}\rho_1(y). \]
Plugging this into \eqref{eqKerLargeTimePol}, we arrive at \eqref{eqKerLargeTime}.
\end{proof}

The other limit, where the integral over the spheroid $\mathscr E_t(x,y)$ simplifies is $t\downarrow|y-x|$. In this case, the spheroid shrinks to the line from $x$ to $y$.

\begin{lemma}\label{thIntLimit}
Let $x,y\in\R^3$ be two arbitrary, different points and $\psi\in C^2(\R^3)$. Then, the function $F:[|y-x|,\infty)\to\R$ defined by
\begin{equation}\label{eqKerTravelTimeCoorF}
F(t) = \int_{\mathscr E_t(x,y)}\frac{\psi(z)}{|z-x||z-y|}\d z,
\end{equation}
with $\mathscr E_t(x,y)=\{z\in\R^3\mid |z-x|+|z-y|\leq t\}$ as before, is two times differentiable and can be expanded around $t=|y-x|$ as
\[ F(t) = \frac{2\pi(t-|y-x|)}{|y-x|}\int_{L_{x,y}}\psi(z)\d s(z)+\frac{\pi(t-|y-x|)^2}{2|y-x|}\int_{L_{x,y}}\left(1-\frac{|z-x|}{|y-x|}\right)|z-x|\Delta\psi(z)\d s(z)+o((t-|y-x|)^2). \]
\end{lemma}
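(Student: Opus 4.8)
The plan is to exploit the geometry of $\mathscr E_t(x,y)$ by passing to prolate spheroidal coordinates $(\xi,\eta,\varphi)$ adapted to the two foci $x,y$, with half-focal-distance $c=\tfrac12|y-x|$. Writing $d=|y-x|$, these are characterised by $|z-x|=c(\xi+\eta)$ and $|z-y|=c(\xi-\eta)$, with $\xi\ge1$, $\eta\in[-1,1]$, $\varphi\in[0,2\pi)$, so that the spheroid $\mathscr E_t(x,y)$ is exactly $\{\xi\le t/d\}$ and its degenerate limit $\xi=1$ is the segment $L_{x,y}$, parametrised by $\eta$. The first step is the computation of the volume element, $\d z=c^3(\xi^2-\eta^2)\,\d\xi\,\d\eta\,\d\varphi$, which combined with $|z-x|\,|z-y|=c^2(\xi^2-\eta^2)$ produces the crucial cancellation
\[ F(t)=c\int_0^{2\pi}\!\!\int_{-1}^{1}\!\!\int_{1}^{t/d}\psi(z(\xi,\eta,\varphi))\,\d\xi\,\d\eta\,\d\varphi. \]
The singular weight is thus absorbed entirely into the Jacobian, leaving a regular integrand; this transfers all $t$-dependence onto the upper limit $\xi=t/d$, and the requisite twice differentiability will follow from the regularity of the azimuthal average established below.

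Next I would expand in $s:=t-d$, i.e.\ about the upper limit $\xi=1+s/d$. Because the coordinate map degenerates on the axis — the $\xi$-derivative of $z$ blows up like $(\xi-1)^{-1/2}$ there — it is essential to carry out the azimuthal integration first. Setting $\Psi(\xi,\eta)=\int_0^{2\pi}\psi(z(\xi,\eta,\varphi))\,\d\varphi$, the circle swept out by $\varphi$ has radius $c\sqrt{(\xi^2-1)(1-\eta^2)}=O(\sqrt{\xi-1})$ about the axis point at axial coordinate $Z=c\xi\eta$; the two-dimensional mean-value expansion for a $C^2$ function then gives $\Psi(\xi,\eta)=2\pi\psi(z_0(\eta))+\Psi_1(\eta)(\xi-1)+o(\xi-1)$, in which the $O(\sqrt{\xi-1})$ odd-in-$\varphi$ terms average away and the coefficient is $\Psi_1(\eta)=2\pi c\eta\,\partial_Z\psi(z_0(\eta))+\pi c^2(1-\eta^2)(\Delta-\partial_{ZZ})\psi(z_0(\eta))$, with $(\Delta-\partial_{ZZ})\psi$ the transverse Laplacian and $z_0(\eta)$ the corresponding point of $L_{x,y}$. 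Integrating $\Psi$ in $\xi$ from $1$ to $1+s/d$ then produces the two expansion orders, the leading one being $2\pi\psi(z_0(\eta))\,s/d$ and the next $\tfrac12\Psi_1(\eta)(s/d)^2$.

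It remains to integrate over $\eta$ and translate back. For the leading term, the arc-length relation $\d s(z)=c\,\d\eta$ turns $c\cdot\tfrac{s}{d}\cdot2\pi\int_{-1}^1\psi(z_0(\eta))\,\d\eta$ into $\tfrac{2\pi s}{d}\int_{L_{x,y}}\psi\,\d s(z)$, matching the claimed first term. For the quadratic term the main work is to reduce $\int_{-1}^1\Psi_1(\eta)\,\d\eta$ to an expression involving only $\Delta\psi$: writing $G(\eta)=\psi(z_0(\eta))$ so that $\partial_Z\psi(z_0)=G'/c$ and $\partial_{ZZ}\psi(z_0)=G''/c^2$, two integrations by parts (using $1-\eta^2=0$ at $\eta=\pm1$) show that the axial term $2\pi c\int_{-1}^1\eta\,\partial_Z\psi\,\d\eta$ and the $-\partial_{ZZ}$ contribution cancel exactly, leaving $\int_{-1}^1\Psi_1\,\d\eta=\pi c^2\int_{-1}^1(1-\eta^2)\,\Delta\psi(z_0(\eta))\,\d\eta$. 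Finally, using $1-\eta^2=|z-x|\,|z-y|/c^2$ on the segment together with $\d s(z)=c\,\d\eta$ and $|z-y|=|y-x|-|z-x|$ converts this into $\tfrac{\pi s^2}{2d}\int_{L_{x,y}}(1-|z-x|/|y-x|)\,|z-x|\,\Delta\psi\,\d s(z)$, the stated second-order term.

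The main obstacle is the coordinate singularity at $\xi=1$: naively $\partial_\xi[\psi\circ z]$ is not even bounded there, so the second-order coefficient cannot be read off termwise. The resolution — integrating in $\varphi$ first, so that the divergent $(\xi-1)^{-1/2}$ factor is multiplied by the $O(\sqrt{\xi-1})$ radius of the azimuthal circle and the odd terms vanish by symmetry — is the delicate point, and the subsequent integration-by-parts identity that collapses the axial derivatives into a single $\Delta\psi$ is where the precise form of the claimed kernel emerges.
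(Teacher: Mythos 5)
Your proposal is correct and is essentially the paper's own argument: the paper works in the coordinates $(r_1,r_2,\varphi)=(|z-x|,|z-y|,\varphi)$, which are just a linear reparametrisation of your prolate spheroidal $(\xi,\eta,\varphi)$ via $r_{1,2}=c(\xi\pm\eta)$, and it exploits exactly the same cancellation of the singular weight against the Jacobian $r_1r_2/|y-x|$, so that $t$ enters only through the integration limits. The rest also matches step for step: your mean-value expansion of the azimuthal average is the same computation the paper carries out by differentiating twice in $t$ and applying Stokes' theorem on the transverse discs (producing $\Delta\psi-(e_1\cdot\nabla)^2\psi$), and both arguments finish with the identical integration by parts along $L_{x,y}$ that cancels the axial-derivative terms and leaves only the full Laplacian.
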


\begin{proof}
We introduce the new coordinates $r_1\ge0$, $r_2\ge0$, and $\varphi\in[0,2\pi]$ of a point $\phi(r_1,r_2,\varphi)\in\R^3$ such that
\[ r_1=|\phi(r_1,r_2,\varphi)-x|\quad\text{and}\quad r_2=|\phi(r_1,r_2,\varphi)-y|\quad\text{for every}\quad\varphi\in[0,2\pi]. \]
To this end, let $e_1=\frac{y-x}{|y-x|}$, $e_2$, $e_3$ be an orthonormal basis of $\R^3$ and set
\[ \phi(r_1,r_2,\varphi) = \frac{x+y}2+\xi(r_1,r_2)e_1+\eta(r_1,r_2)\cos\varphi\,e_2+\eta(r_1,r_2)\sin\varphi\,e_3 \]
with the functions $\xi$ and $\eta$ explicitly given by
\begin{equation}\label{eqKerTravelTimeCoorFcts}
\xi(r_1,r_2) = \frac{r_1^2-r_2^2}{2|y-x|}\quad\text{and}\quad\eta(r_1,r_2) = \sqrt{r_1^2-\left(\frac12|y-x|+\xi(r_1,r_2)\right)^2},
\end{equation}
see \autoref{fgKerTravelTime}.

\begin{figure}[ht]
\begin{center}
\begin{tikzpicture}[scale=2,font=\footnotesize]
%\draw(0,0) ellipse (2 and 1.732);
\draw(-1.5,0)--(1.5,0);
\draw(-1,0.05)--(-1,-0.05) node[below] {$x$};
\draw(1,0.05)--(1,-0.05) node[below] {$y$};
\draw(0,0.05)--(0,-0.05) node[below] {$\tfrac12(x+y)$};
\draw(-1,0)--(0.6,0.9) node[pos=0.5,above] {$r_1$};
\draw(1,0)--(0.6,0.9) node[pos=0.5,right] {$r_2$};
\draw[->,thick](0,0)--(0.6,0) node[pos=0.5,above] {$\xi e_1$};
\draw[->,thick](0.6,0)--(0.6,0.9) node[pos=0.5,left] {$\eta e_2$};
\end{tikzpicture}
\end{center}
\caption{The relation between the variables $r_1$ and $r_2$ and the functions $\xi$ and $\eta$ drawn for $\varphi=0$. The explicit expressions \eqref{eqKerTravelTimeCoorFcts} for the functions $\xi$ and $\eta$ are obtained by solving the equation system of the two Pythagorean equations $r_1^2=(\tfrac12|x-y|+\xi)^2+\eta^2$ and $r_2^2=(\tfrac12|x-y|-\xi)^2+\eta^2$ for $\xi$ and $\eta$.}\label{fgKerTravelTime}
\end{figure}
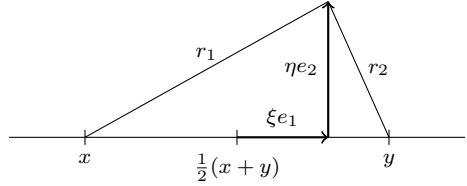

The volume element by switching to the new coordinates $r_1$, $r_2$, $\varphi$ is given by
\begin{align*}
|\det\d\phi(r_1,r_2,\varphi)| &= \left|\det
\begin{pmatrix}
\partial_{r_1}\xi(r_1,r_2) & \partial_{r_2}\xi(r_1,r_2) & 0 \\
\partial_{r_1}\eta(r_1,r_2)\cos\varphi & \partial_{r_2}\eta(r_1,r_2)\cos\varphi & -\eta(r_1,r_2)\sin\varphi \\
\partial_{r_1}\eta(r_1,r_2)\sin\varphi & \partial_{r_2}\eta(r_1,r_2)\sin\varphi & \phantom-\eta(r_1,r_2)\cos\varphi
\end{pmatrix}\right| \\
&= |\eta(r_1,r_2)(\partial_{r_2}\eta(r_1,r_2)\partial_{r_1}\xi(r_1,r_2)-\partial_{r_1}\eta(r_1,r_2)\partial_{r_2}\xi(r_1,r_2))|.
\end{align*}
Using the relations
\begin{align*}
\partial_{r_1}\eta(r_1,r_2) &= \phantom-\frac1{\eta(r_1,r_2)}\left(r_1-\left(\frac12|x-y|+\xi(r_1,r_2)\right)\partial_{r_1}\xi(r_1,r_2)\right), \\
\partial_{r_2}\eta(r_1,r_2) &= -\frac1{\eta(r_1,r_2)}\left(\frac12|x-y|+\xi(r_1,r_2)\right)\partial_{r_2}\xi(r_1,r_2),
\end{align*}
following directly from \eqref{eqKerTravelTimeCoorFcts}, we find that
\[ |\det\d\phi(r_1,r_2,\varphi)| = |r_1\partial_{r_2}\xi(r_1,r_2)| = \frac{r_1r_2}{|y-x|}. \]

Thus, switching to these new coordinates (and remembering that a point with distance $r_1$ to $x$ and $r_2$ to $y$ only exists if the triangle inequalities $||y-x|-r_1|\le r_2\le|y-x|+r_1$ are fulfilled), the integral in \eqref{eqKerTravelTimeCoorF} becomes
\[ F(t) = \frac1{|y-x|}\int_0^{\frac{t+|y-x|}{2}}\int_{||y-x|-r_1|}^{\min\{t-r_1,|x-y|+r_1\}}\int_0^{2\pi}\psi(\phi(r_1,r_2,\varphi))\d\varphi\d r_2\d r_1. \]

To expand $F$ now in a Taylor polynomial around $t=|y-x|$, we first realise that $F(|y-x|) = 0$ and then calculate the derivatives at $t=|y-x|$. For the first derivative, we find (the inequality $||y-x|-r_1|\le t-r_1\le|x-y|+r_1$ is equivalent to $\frac{t-|y-x|}2\le r_1\le\frac{t+|y-x|}2$) that
\[ F'(t) = \frac1{|y-x|}\int_{\frac{t-|y-x|}2}^{\frac{t+|y-x|}2}\int_0^{2\pi}\psi(\phi(r_1,t-r_1,\varphi))\d\varphi\d r_1. \]
Since $\phi(r_1,|y-x|-r_1,\varphi)=x+r_1e_1$ for all $r_1\in[0,|y-x|]$ and all $\varphi\in[0,2\pi]$, we get
\[ F'(|y-x|) = \frac{2\pi}{|y-x|}\int_0^{|y-x|}\psi\left(x+r_1\frac{y-x}{|y-x|}\right)\d r_1 = \frac{2\pi}{|y-x|}\int_{L_{x,y}}\psi(z)\d s(z), \]
where $L_{x,y}$ denotes the straight line between the points $x$ and $y$.

For the second derivative, we obtain
\begin{multline*}
F''(t) = \frac1{2|y-x|}\int_0^{2\pi}\left(\psi\left(\phi\left(\frac{t+|y-x|}2,\frac{t-|y-x|}2,\varphi\right)\right)-\psi\left(\phi\left(\frac{t-|y-x|}2,\frac{t+|y-x|}2,\varphi\right)\right)\right)\d\varphi \\
+\frac1{|y-x|}\int_{\frac{t-|y-x|}2}^{\frac{t+|y-x|}2}\int_0^{2\pi}\nabla\psi(\phi(r_1,t-r_1,\varphi))\cdot\partial_{r_2}\phi(r_1,t-r_1,\varphi)\d\varphi\d r_1.
\end{multline*}
To calculate the limit $t\downarrow|y-x|$, we remark that
\begin{multline*}
\int_0^{2\pi}\nabla\psi(\phi(r_1,t-r_1,\varphi))\cdot\partial_{r_2}\phi(r_1,t-r_1,\varphi)\d\varphi \\
= \partial_{r_2}\xi(r_1,t-r_1)\int_0^{2\pi}\nabla\psi(\phi(r_1,t-r_1,\varphi))\cdot e_1\d\varphi+\frac{\partial_{r_2}\eta(r_1,t-r_1)}{\eta(r_1,t-r_1)}\int_{\partial D(t,r_1)}\nabla\psi(z)\cdot\nu(z)\d s(z),
\end{multline*}
where we introduced the manifold $D(t,r_1)$ with boundary given as the disc normal to $e_1$ with center in $\frac12(x+y)+\xi(r_1,t-r_1)e_1$ and radius $\eta(r_1,t-r_1)$, whose boundary $\partial D(t,r_1)$ is parametrised by $\phi(r_1,t-r_1,\cdot)$ and its unit normal vector field $\nu$ is given by $\nu(\phi(r_1,t-r_1,\varphi))=\cos\varphi e_2+\sin\varphi e_3$. Now, using Stokes' theorem, we can rewrite the boundary integral to an integral over the two-dimensional disc and find
\[ \int_{\partial D(t,r_1)}\nabla\psi(z)\cdot\nu(z)\d s(z) = \int_{D(t,r_1)}(\Delta\psi(z)-(e_1\cdot\nabla)^2\psi(z))\d s(z). \]
Plugging all this in our formula for $F''$, we can take the limit $t\downarrow|y-x|$ and get with
\[ \partial_{r_2}\xi(r_1,|y-x|-r_1) = -\left(1-\frac{r_1}{|y-x|}\right)\quad\text{and}\quad\lim_{t\downarrow|y-x|}\eta(r_1,t-r_1)\partial_{r_2}\eta(r_1,t-r_1) = r_1\left(1-\frac{r_1}{|y-x|}\right) \]
that
\begin{multline*}
F''(|y-x|) = \frac{\pi(\psi(y)-\psi(x))}{|y-x|} \\
+\frac{2\pi}{|y-x|}\int_0^{|y-x|}\left(1-\frac{r_1}{|y-x|}\right)\left(\frac{r_1}2\left(\Delta\psi(x+r_1e_1)-(e_1\cdot\nabla)^2\psi(x+r_1e_1)\right)-e_1\cdot\nabla\psi(x+r_1e_1)\right)\d r_1.
\end{multline*}
Since an integration by parts gives us
\begin{align*}
\int_0^{|y-x|}\left(1-\frac{r_1}{|y-x|}\right)\frac{r_1}2(e_1\cdot\nabla)^2\psi(x+r_1e_1)\d r_1 &= -\int_0^{|y-x|}\left(\frac12-\frac{r_1}{|y-x|}\right)e_1\cdot\nabla\psi(x+r_1e_1)\d r_1 \\
&= \int_0^{|y-x|}\frac{r_1}{|y-x|}e_1\cdot\nabla\psi(x+r_1e_1)\d r_1-\frac12(\psi(y)-\psi(x)),
\end{align*}
all but the first term in the integral cancel each other, and the expression simplifies to
\begin{align*}
F''(|y-x|) &= \frac\pi{|y-x|}\int_0^{|y-x|}\left(1-\frac{r_1}{|y-x|}\right)r_1\Delta\psi(x+r_1e_1)\d r_1 \\
&= \frac\pi{|y-x|}\int_{L_{x,y}}\left(1-\frac{|z-x|}{|y-x|}\right)|z-x|\Delta\psi(z)\d s(z).
\end{align*}
\end{proof}

\begin{corollary}
 Let $\alpha_1\in C^2(\R^3)$, $\rho_1\in C^1(\R^3)$ and $K(t,x,y;\varepsilon)$ be given by \eqref{eqFirSolWaveKernel}. Then
 \begin{equation}\label{eq:KLimit}
  \lim_{t\downarrow|y-x|} \frac{\partial}{\partial t} K(t,x,y;\varepsilon) = \frac{1}{8\pi|y-x|}\int_{L_{x,y}}\varepsilon \alpha_1(z)\d s(z)
 \end{equation}
 and
 \begin{equation}
  \lim_{t\downarrow|y-x|} \frac{\partial^2}{\partial t^2} K(t,x,y;\varepsilon) = \frac{1-\varepsilon\alpha_1(y)}{4\pi|y-x|} + \frac{\varepsilon\rho_1(x) - \varepsilon\rho_1(y)}{8\pi|y-x|} + \frac{1}{16\pi|y-x|}\int_{L_{x,y}}\left(1-\frac{|z-x|}{|y-x|}\right)|z-x|\varepsilon\Delta \alpha_1(z)\d s(z).
 \end{equation}
\end{corollary}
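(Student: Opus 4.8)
The plan is to differentiate the kernel \eqref{eqFirSolWaveKernel} summand by summand and pass to the one-sided limit $t\downarrow|y-x|$, reducing every piece to the quantity
\[ F_\psi(t)=\int_{\mathscr E_t(x,y)}\frac{\psi(z)}{|z-x||z-y|}\d z, \]
whose behaviour near $t=|y-x|$ is governed by \autoref{thIntLimit}. Write $K=K_0+K_1+K_2+K_3$ for the four terms of \eqref{eqFirSolWaveKernel} and abbreviate $\tau(z)=|z-x|+|z-y|$, so that $\mathscr E_t(x,y)=\{\tau\le t\}$ and the factors $t-\tau(z)$ occurring in $K_2$ and $K_3$ vanish on $\partial\mathscr E_t(x,y)$. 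The first term $K_0=\frac{(t-|y-x|)^2}{8\pi|y-x|}(1-\varepsilon\alpha_1(y))$ is an explicit quadratic in $t$, so $\partial_tK_0\to0$ and $\partial_{tt}K_0\to\frac{1-\varepsilon\alpha_1(y)}{4\pi|y-x|}$, the first summand of the claimed second-derivative formula.

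The term $K_1=\frac{\varepsilon}{16\pi^2}F_{\alpha_1}(t)$ is exactly the object of \autoref{thIntLimit} with $\psi=\alpha_1\in C^2$. Reading the Taylor coefficients off that lemma, I would obtain
\[ \lim_{t\downarrow|y-x|}\partial_tK_1=\frac{\varepsilon}{16\pi^2}\cdot\frac{2\pi}{|y-x|}\int_{L_{x,y}}\alpha_1(z)\d s(z)=\frac1{8\pi|y-x|}\int_{L_{x,y}}\varepsilon\alpha_1(z)\d s(z), \]
which is already \eqref{eq:KLimit} once the vanishing contributions of $K_0,K_2,K_3$ to the first derivative are in place, and likewise $\lim_{t\downarrow|y-x|}\partial_{tt}K_1=\frac{\varepsilon}{16\pi|y-x|}\int_{L_{x,y}}\bigl(1-\frac{|z-x|}{|y-x|}\bigr)|z-x|\Delta\alpha_1(z)\d s(z)$, the last summand of the second formula.

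For $K_2$ and $K_3$ the idea is to strip off the powers of $t-\tau$ by the primitive-function (Fubini) identity already used in the proof of \autoref{thFirSolWave}:
\begin{gather*}
\int_{\mathscr E_t(x,y)}\frac{(t-\tau(z))\psi(z)}{|z-x||z-y|}\d z=\int_{|y-x|}^tF_\psi(s)\d s, \\
\int_{\mathscr E_t(x,y)}\frac{(t-\tau(z))^2\psi(z)}{|z-x||z-y|}\d z=2\int_{|y-x|}^t\int_{|y-x|}^sF_\psi(s')\d s'\d s.
\end{gather*}
Setting $g(z)=\nabla\rho_1(z)\cdot\frac{z-y}{|z-y|}$, the first identity with $\psi=g$ gives $K_2=\frac{\varepsilon}{16\pi^2}\int_{|y-x|}^tF_g(s)\d s$, whence $\partial_tK_2\to\frac{\varepsilon}{16\pi^2}F_g(|y-x|)=0$ and $\partial_{tt}K_2\to\frac{\varepsilon}{16\pi^2}F_g'(|y-x|)=\frac{\varepsilon}{8\pi|y-x|}\int_{L_{x,y}}g(z)\d s(z)$. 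For $K_3$ the second identity with $\psi=g/|z-y|$ produces a double primitive of $F_{g/|z-y|}$, so that after one or two $t$-derivatives there always remains an $F$ evaluated on the degenerate spheroid $\mathscr E_{|y-x|}(x,y)$, making both limits of $K_3$ vanish. The line integral in $K_2$ is then evaluated by noting that on $L_{x,y}$ one has $\frac{z-y}{|z-y|}=-e_1$ with $e_1=\frac{y-x}{|y-x|}$, so $g|_{L_{x,y}}=-e_1\cdot\nabla\rho_1$ and the fundamental theorem of calculus gives $\int_{L_{x,y}}g(z)\d s(z)=\rho_1(x)-\rho_1(y)$; this yields the middle summand $\frac{\varepsilon\rho_1(x)-\varepsilon\rho_1(y)}{8\pi|y-x|}$.

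I expect the main obstacle to be the treatment of $K_2$ and $K_3$: the factor $\frac{z-y}{|z-y|}$ renders $g$ discontinuous at $z=y$, so \autoref{thIntLimit} cannot be applied verbatim through its second-order term. The saving observation is that the primitive-function reduction lowers the order of differentiation, so that only $F_g'(|y-x|)$ enters, and this first-order coefficient depends solely on the restriction of $g$ to $L_{x,y}$, where the direction $\frac{z-y}{|z-y|}$ is the constant $-e_1$ and $g$ is continuous. Summing the four contributions then reproduces both limits of the corollary.
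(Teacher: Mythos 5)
Your proof is correct and follows essentially the same route as the paper: a term-by-term application of \autoref{thIntLimit} to the four pieces of the kernel \eqref{eqFirSolWaveKernel}, with the $\rho_1$-terms reduced to the line integral $\int_{L_{x,y}}\nabla\rho_1(z)\cdot\frac{z-y}{|z-y|}\,\d s(z)=\rho_1(x)-\rho_1(y)$. You are in fact more careful than the paper, whose entire proof is this line-integral identity plus the remark that the lemma applies: your primitive-function reduction of the terms carrying powers of $t-|z-x|-|z-y|$, and your observation that the integrand $\nabla\rho_1(z)\cdot\frac{z-y}{|z-y|}$ is discontinuous at $z=y$ so that only the first-order (line-restricted) coefficient of the lemma may be invoked for it, make explicit precisely the steps the paper leaves implicit.
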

\begin{proof}
 The first identity follows immediately by applying the previous lemma on the kernel. For the second identity, note that
 \[
  \int_{L_{x,y}} \nabla\rho_1(z)\cdot\frac{z-y}{|z-y|}\d s = -\int_0^{|y-x|} \frac{\d}{\d \lambda} \rho_1\left(x+\lambda\frac{y-x}{|y-x|}\right) \d \lambda = \rho_1(x)-\rho_1(y).
 \]

\end{proof}

\section{Forward Problem}
\label{sec:dp}

We will now consider the idealised problem where the light illumination $\Phi_{r,\theta}$ of the laser pulse is perfectly focused on the plane $E_{r,\theta}$ and where the perturbations of the parameters $\rho$ and $\alpha$ are so small that we can ignore the error term $P_{r,\theta,2}$ for the measurements. 
%So, we want to assume that the $\Phi_{r,\theta}$ is given so that we have with the laser intensity $\Phi^{(0)}$ up to some error $\delta>0$
%\[ \left|\int_{\R^3}\Phi_{r,\theta}(x)f(x)\d x-\Phi^{(0)}\int_{E_{r,\theta}}f(x)\d s(x)\right| < \delta\quad\text{for every}\quad f\in C_{\mathrm c}(\R^3). \]
%Moreover, we assume that the perturbation, tuned by the parameter $\varepsilon>0$, is so small that
%\[ \varepsilon^2\|P_{r,\theta,2}(t,x;\varepsilon)\|<\delta. \]
%\commentP{Appropriate norm?}
Henceforth we shall assume that we are given the measurements
\[ M_{r,\theta}(t,x) = \int_{B_t(x)\cap E_{r,\theta}}K(t,x,y;\varepsilon)f(y)\d y = \mathcal{R}_3[K(t,x,.;\varepsilon)f(.)](r,\theta)\]
with $f(y)=\Phi^{(0)}\gamma(y)\mu(y)$ and $\mathcal{R}_3$ being the three-dimensional Radon transform. Note that the kernel vanishes for $|y-x|>t$.
%\[ |M_{r,\theta}(t,x)-\int\int\int\int P_{r,\theta}(t,x;\varepsilon)| < 2\delta. \]

For the following discussion, we shall assume that we are given the measurement data $M_{r,\theta}(t,x)$ for all 
$r\geq0$, $\theta \in {\mathbf S}^2$, and $t>0$ at distinct points $x \in \Sigma$, where $\Sigma$ is a detector surface enclosing the compact support of $f$. We assume the seeked functions to be 
sufficiently smooth and that the perturbation of the sound speed and density are supported in a strict subset of $\interior(\supp f)$.

%%%%%%%%%%%% 3D Figure %%%%%%%%%%%%%%%%%%%%%%%

\tdplotsetmaincoords{70}{105}  % the main Coordinates angle
\pgfmathsetmacro{\inradius}{1.5} % The radii definitions
\pgfmathsetmacro{\outradius}{1.7}
\pgfmathsetmacro{\detectorradius}{5.2}
\pgfmathsetmacro{\detectorRout}{5.5}
\pgfmathsetmacro{\detectorRin}{4.9}

\begin{figure}[ht]
\centering
\begin{tikzpicture}[font=\footnotesize,scale=0.6,tdplot_main_coords]

%\tikzstyle{every node}=[font=\large]

%    The Spheres
\shadedraw[tdplot_screen_coords,ball color = white] (0,0) circle (\detectorradius);
\shadedraw[tdplot_screen_coords,ball color = white] (0,0) circle (\inradius);

% The Domain Omega
\coordinate[label=below:$\Omega$] (OM) at(70:1.5cm);

%   Drawing Arcs command
\newcommand \drawingArcs[1]{%
  \draw[dashed] (#1,0,0) arc (0:360:#1);
  \draw[thick] (#1,0,0) arc (0:110:#1);
  \draw[thick] (#1,0,0) arc (0:-70:#1);
}
\drawingArcs{\detectorradius};
\drawingArcs{\inradius};

% Drawing Laser and the arrows behind it
\node[cylinder, draw, shape aspect=.5, alias=cyl](laser) at (-0.15,-3.55) {Laser};
\draw[thick,<->] (-1.0,-4.0,0) arc (-150:0:1);
\draw[thick,<->] (0,-4.45,-0.75) -- (0,-4.45,0.5) ;

% Drawing Focused Plane 
\fill[opacity=0.5,blue] (\inradius+1.2,\inradius+0.7,0) -- (\inradius+1.2,-\inradius-0.7,0) -- (-\inradius-1.2,-\inradius-0.7,0) -- (-\inradius-1.2,\inradius+0.7,0) -- cycle;
\coordinate[label=below:$E_{r,\theta}$] (plane) at (14:2.cm);

% Intersection of the plane with the object
\filldraw[opacity=0.7,black!40!green] (0,0) circle (\inradius);

\tdplotsetthetaplanecoords{40}
\draw[thick,tdplot_rotated_coords] (\inradius,0,0) arc (0:151:\inradius);
\draw[dashed,tdplot_rotated_coords] (\inradius,0,0) arc (180:-40:-\inradius);
\draw[thick,tdplot_rotated_coords] (\inradius,0,0) arc (360:336:\inradius);

% Drawing the detectors
\foreach \yy in {0,...,36}
{
	\pgfmathsetmacro{\ang}{10*\yy-5}
	\filldraw[black] (\ang:\detectorradius cm) circle(1.2pt);
	\filldraw[black] (\ang+3:\detectorRin cm) circle(1.2pt);
}
\coordinate[label=below:$\Sigma$] (sig) at (55:5.7cm);

% The point y
\filldraw[black] (215:0.4cm) circle(1.2pt);
\coordinate[label=above:$y$] (y) at (215:0.4cm);

% The point detector x
\filldraw[red] (215:\detectorradius cm) circle(1.2pt);
\coordinate[label=left:$x$] (x) at (215:\detectorradius cm);

% The line segment  between x and the point on the boundary of OM in the direction of xx'
\draw[thick,red] (215:\detectorradius cm) -- (215:1.44cm);

% The line segment inside OM in the direction of xx'
\draw[dashed,red] (215:-\detectorradius cm) -- (215:1.38cm);

% The line segment from the point on the boundary of OM  to point x' in the direction of xx'
\draw[thick,red] (35:1.5 cm) -- (35:\detectorradius cm);

% The point detector  x'
\filldraw[red] (35:\detectorradius cm) circle(1.2pt);
\coordinate[label=right:$x'$] (xprime) at (36:\detectorradius cm);

\end{tikzpicture}

\caption{Measurement setup }\label{fig:1}
\end{figure}

%%%%%%%%%%%%%%%%%%%%%%%%%%%%%%%%%%%%%%%%%%%%%%%%%%%%%%%%%%%%

\section{Reconstruction Algorithm}
\label{sec:reconstruction}
Under the assumptions of the previous section, we are going to present a reconstrucion method for the functions $f$, $\alpha_1$ and $\rho_1$. Given our measurement data, we define the functions
\begin{align}
 M_{0,x}(y) &= \lim_{t\downarrow|y-x|}\partial_t\mathcal{R}_3^{-1}[M_{r,\theta}(t,x)](y),\nonumber\\
 N_{0,x}(y) &= \lim_{t\downarrow|y-x|}\partial_{tt}\mathcal{R}_3^{-1}[M_{r,\theta}(t,x)](y),\nonumber\\
 M_{\infty,x}(y) &= \lim_{t\rightarrow\infty} \partial_{tt}\mathcal{R}_3^{-1}[M_{r,\theta}(t,x)](y),\nonumber\\
 N_{\infty,x}(y) &= \lim_{t\rightarrow\infty} \left( M_{\infty,x}(y)t-\partial_t\mathcal{R}_3^{-1}[M_{r,\theta}(t,x)](y)\right),\nonumber
\end{align}
where $x\in\Sigma$ and $y\in\mathbb{R}^3$. Then, by virtue of \autoref{thKerLargeTime} and \autoref{thIntLimit} together with integration by parts, we obtain
\begin{align}\label{eq:MZero}
 M_{0,x}(y) &= \frac{f(y)}{8\pi|y-x|}\int_{L_{x,y}}\varepsilon \alpha_1(z)\d s(z),\\
%\end{equation}
%\begin{equation}
\label{eq:NZero}
 N_{0,x}(y) &= f(y)\left(\frac{1-\varepsilon\alpha_1(y)}{4\pi|y-x|}-\frac{\varepsilon\rho_1(y)}{8\pi|y-x|} + \frac{1}{16\pi|y-x|}\int_{L_{x,y}}\left(1-\frac{|z-x|}{|y-x|}\right)|z-x|\varepsilon\Delta \alpha_1(z)\d s(z)\right),\\
%\end{equation}
%\begin{equation}
 M_{\infty,x}(y) &= f(y)\left( \frac{1-\varepsilon \alpha_1(y)- \varepsilon \rho_1(y)}{4\pi|y-x|}+\int_{\R^3}\frac{\varepsilon\rho_1(z)}{16\pi^2}\nabla_z\left(\frac{1}{|z-x|}\right)\cdot\nabla_z\left(\frac{1}{|z-y|}\right)\d z\right),\\
%\end{equation}
%and
%\begin{equation}
\label{eq:NInf}
 N_{\infty,x}(y) &= f(y)\frac{1-\varepsilon \alpha_1(y) - \varepsilon \rho_1(y)}{4\pi}.
\end{align}
In the first step, we are going to reconstruct $\alpha_1$. Equation \eqref{eq:NInf} gives us the values of $f$ on $\supp f \setminus (\supp \alpha_1 \cup \supp\rho_1)$. By knowledge of $f$ in a vicinity close to the boundary of $\supp f$ together with equation \eqref{eq:MZero}, we infer to know all line integrals
of $\alpha_1$.
\begin{proposition}\label{Prop_Reconstruct_q}
Assume $\supp\alpha_1\cup\supp\rho_1\subsetneq \interior(\supp f)$ and suppose that the data $M_{0,x}$ and $N_{\infty,x}$ is given for all $x\in\Sigma$. Then the three-dimensional $X$-ray transform $X_3[\varepsilon \alpha_1](y,v)$ is known for all $y\in\mathbb{R}^3$ and $v\in\mathbf{S}^2$.
\end{proposition}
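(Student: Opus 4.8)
The plan is to first recover the source $f$ on the region where the perturbations vanish, using \eqref{eq:NInf}, and then to convert the line-segment integrals of $\varepsilon\alpha_1$ encoded in \eqref{eq:MZero} into full line integrals, i.e.\ into values of the three-dimensional $X$-ray transform.

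First I would exploit that the right-hand side of \eqref{eq:NInf} is independent of $x$: the datum $N_{\infty,x}(y)$ delivers $\tfrac1{4\pi}f(y)\bigl(1-\varepsilon\alpha_1(y)-\varepsilon\rho_1(y)\bigr)$ for every $y\in\R^3$. On $\R^3\setminus(\supp\alpha_1\cup\supp\rho_1)$ the perturbations vanish, so there $f(y)=4\pi N_{\infty,x}(y)$ is known. By the hypothesis $\supp\alpha_1\cup\supp\rho_1\subsetneq\interior(\supp f)$, this determines $f$ in particular on a nonempty collar lying in $\interior(\supp f)$ and surrounding the two perturbation supports.

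Next, wherever $f(y)$ is known and non-zero, \eqref{eq:MZero} can be solved for the segment integral,
\[
\int_{L_{x,y}}\varepsilon\alpha_1(z)\,\d s(z)=\frac{8\pi\,|y-x|}{f(y)}\,M_{0,x}(y),
\]
so every integral of $\varepsilon\alpha_1$ along the segment from a detector point $x\in\Sigma$ to such a $y$ becomes accessible. The geometric heart of the argument is then to choose the endpoints so that a single segment captures the whole line. Recalling that the $X$-ray transform depends only on the underlying line, fix a line $\ell$ in direction $v\in\mathbf{S}^2$ meeting $\supp\alpha_1$, parametrise it by $s\mapsto y_0+sv$, and let $[a,b]$ be the smallest parameter interval containing $\ell\cap\supp\alpha_1$. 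Since $\Sigma$ encloses $\supp f\supset\supp\alpha_1$, the line meets $\Sigma$ in a point $x=y_0+s_xv$ with $s_x<a$; and since $\supp\alpha_1\cup\supp\rho_1$ is compactly contained in $\interior(\supp f)$, I can choose $y=y_0+s_yv$ with $s_y>b$ in the exit collar $\interior(\supp f)\setminus(\supp\alpha_1\cup\supp\rho_1)$, where $f(y)$ is known from the previous step. For such a pair the segment $L_{x,y}$ contains all of $\ell\cap\supp\alpha_1$, and therefore
\[
X_3[\varepsilon\alpha_1](y,v)=\int_{L_{x,y}}\varepsilon\alpha_1(z)\,\d s(z)=\frac{8\pi\,|y-x|}{f(y)}\,M_{0,x}(y).
\]
For lines disjoint from $\supp\alpha_1$ the transform is zero and is known trivially, so this recovers $X_3[\varepsilon\alpha_1](y,v)$ for every line, that is, for all $y\in\R^3$ and $v\in\mathbf{S}^2$.

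The step I expect to be the main obstacle is the division by $f(y)$: the last display is only licensed where the recovered value of $f$ is non-zero, and a priori $f$ could vanish on part of the exit collar. I would handle this by first carrying out the reconstruction on the admissible configurations where $f\neq0$ — which form an open, dense subset of the relevant endpoints because $\{f\neq0\}$ is dense in $\supp f$ — thereby recovering $X_3[\varepsilon\alpha_1]$ for a dense family of lines, and then extending to all lines by continuity of the $X$-ray transform of the compactly supported, smooth function $\alpha_1$ in $(y,v)$. If one is willing to assume, as is physically natural for the absorption--Grüneisen product, that $f>0$ throughout $\interior(\supp f)$, this technical point disappears and the formula above applies directly.
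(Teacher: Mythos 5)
Your proposal is correct, and it shares the paper's overall skeleton: recover $f$ on the collar $\interior(\supp f)\setminus(\supp\alpha_1\cup\supp\rho_1)$ from \eqref{eq:NInf}, then divide the data \eqref{eq:MZero} by the known, nonzero value of $f$ at a collar point to extract full line integrals of $\varepsilon\alpha_1$. The geometric step differs, however. The paper uses the \emph{two} detector points $x,x'\in\Sigma$ cut out by the same line and adds the two one-sided data at a \emph{common} collar point $y_0\in L_{x,x'}$: since $8\pi\left(M_{0,x}(y_0)|y_0-x|+M_{0,x'}(y_0)|y_0-x'|\right)=f(y_0)\int_{L_{x,x'}}\varepsilon\alpha_1(z)\,\d s(z)$, the full line integral appears no matter where $y_0$ sits on the line, provided only that $f(y_0)\neq0$ is known there (entry side, exit side, or a gap between components of the perturbation support). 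You instead use a \emph{single} detector point and push the evaluation point $y$ past the last intersection of the line with $\supp\alpha_1$, so that one segment already captures the whole line; this is valid, but it needs the observation (which you essentially make) that the line leaves the compact set $\supp\alpha_1\cup\supp\rho_1$ strictly before leaving $\interior(\supp f)$, and it restricts the admissible evaluation points to the exit portion of the line, so it is marginally less flexible than the paper's two-sided trick. A genuine merit of your write-up is the final paragraph: the paper simply posits a collar point ``for which $f(y_0)\neq 0$ is known,'' whereas you note that $f$ could vanish on the collar portion of a given line and repair this by recovering $X_3[\varepsilon\alpha_1]$ for a dense family of lines and extending by continuity of the X-ray transform of the compactly supported, continuous $\alpha_1$. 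That density step is itself slightly glossed — perturbing the line requires checking that nearby lines keep the right geometry, i.e.\ that the perturbed endpoint still lies past the perturbed line's last exit from $\supp\alpha_1$, which follows from upper semicontinuity of the exit parameter for compact sets — but the idea is sound and is an honest strengthening of the paper's argument, which has the same unaddressed issue.
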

\begin{proof}
Let $[0,\infty) \ni \lambda \mapsto \gamma_v(\lambda)= x+\lambda v$, $v=(y-x)/|y-x|$, be the parametrisation of the half-ray connecting $x\in\Sigma$ with a point $y\in\supp f$. Thus, 
$y=\gamma_v(\mu)$ for some $\mu>0$ and the line integral in \eqref{eq:MZero} becomes 
\[
 \int_{L_{x,y}}\varepsilon \alpha_1(z)\d s(z) = \int_0^{\mu} \varepsilon \alpha_1(\gamma_v(\lambda))\d \lambda. 
\]
Choose $x'\in\Sigma$ such that $y\in L_{x,x'}$. Then
\[
 8\pi\left(M_{0,x}(y)|y-x| + M_{0,x'}(y)|y-x'|\right) = f(y)\int_{L_{x,x'}}\varepsilon \alpha_1(z)\d s(z).
\]
Let $y_0\in L_{x,x'}\cap\left(\supp f\setminus \left(\supp\alpha_1\cup\supp\rho_1\right)\right)$ for which $f(y_0)\neq 0$ is known. Division through $f(y_0)$ gives us
\[
 X_3[\varepsilon\alpha_1](y,v) = 8\pi\frac{M_{0,x}(y_0)|y_0-x| + M_{0,x'}(y_0)|y_0-x'|}{f(y_0)}.
\]
\end{proof}
By inverting the $X$-ray transform, we obtain $\alpha_1$. 
\begin{proposition}
 For $\alpha_1$ sufficiently small in the $C^2$-norm we get
 \begin{align}
 f(y)&=\left( 8\pi|y-x|N_{0,x}(y) - 4\pi N_{\infty,x}(y) \right)\left(1-\varepsilon\alpha_1(y) + \frac{1}{2}\int_{L_{x,y}}\left(1-\frac{|z-x|}{|y-x|}\right)|z-x|\varepsilon\Delta \alpha_1(z)\d s(z)\right)^{-1}.\\
 \varepsilon\rho_1(y) &= 1 - \varepsilon\alpha_1(y) - \frac{4\pi N_{\infty,x}(y)}{f(y)}.
\end{align}
\end{proposition}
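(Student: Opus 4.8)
The plan is to view \eqref{eq:NZero} and \eqref{eq:NInf} as a linear system in the two unknowns $f(y)$ and $\varepsilon\rho_1(y)$ at each fixed pair $x\in\Sigma$, $y\in\R^3$, exploiting that $\alpha_1$---and therefore the weighted line integral of $\varepsilon\Delta\alpha_1$ occurring in \eqref{eq:NZero}---has already been recovered by \autoref{Prop_Reconstruct_q} and the subsequent inversion of the $X$-ray transform. The linear combination used below is chosen precisely so that the unknown $\varepsilon\rho_1(y)$ drops out: I solve for $f(y)$ first, and then substitute back into \eqref{eq:NInf} to read off $\varepsilon\rho_1(y)$.

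Concretely, writing $I(y)=\int_{L_{x,y}}\bigl(1-\tfrac{|z-x|}{|y-x|}\bigr)|z-x|\varepsilon\Delta\alpha_1(z)\,\d s(z)$ for the now-known line integral, I would multiply \eqref{eq:NZero} by $8\pi|y-x|$ and \eqref{eq:NInf} by $4\pi$, obtaining
\[
8\pi|y-x|N_{0,x}(y)=f(y)\bigl(2(1-\varepsilon\alpha_1(y))-\varepsilon\rho_1(y)+\tfrac12 I(y)\bigr),\qquad 4\pi N_{\infty,x}(y)=f(y)(1-\varepsilon\alpha_1(y)-\varepsilon\rho_1(y)).
\]
Subtracting the second relation from the first cancels the $\varepsilon\rho_1(y)$ contribution and leaves
\[
8\pi|y-x|N_{0,x}(y)-4\pi N_{\infty,x}(y)=f(y)\bigl(1-\varepsilon\alpha_1(y)+\tfrac12 I(y)\bigr),
\]
so that dividing by the bracketed factor yields exactly the asserted expression for $f(y)$. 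With $f(y)$ determined, dividing \eqref{eq:NInf} by $f(y)$ and solving for $\varepsilon\rho_1(y)$ gives $\varepsilon\rho_1(y)=1-\varepsilon\alpha_1(y)-4\pi N_{\infty,x}(y)/f(y)$, which is the second formula.

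The only point that is not pure algebra, and the reason for the smallness hypothesis, is the invertibility of the factor $1-\varepsilon\alpha_1(y)+\tfrac12 I(y)$ by which one divides: the reconstruction of $f$ is valid only where this factor does not vanish. At $\alpha_1\equiv0$ it equals $1$, and since $I(y)$ depends linearly on $\Delta\alpha_1$ with the weight $\bigl(1-\tfrac{|z-x|}{|y-x|}\bigr)|z-x|$ bounded along $L_{x,y}$ by $|y-x|$, one has $|I(y)|\le|y-x|^2\sup|\varepsilon\Delta\alpha_1|$; as the relevant points $y$ range over the compact set $\supp f$ and $x$ over $\Sigma$, the distance $|y-x|$ is uniformly bounded and hence $|I(y)|\le C\varepsilon\|\alpha_1\|_{C^2}$ with $C$ depending only on the geometry. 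Thus the factor differs from $1$ by at most $C'\varepsilon\|\alpha_1\|_{C^2}$, so for $\alpha_1$ sufficiently small in the $C^2$-norm it stays bounded away from $0$ uniformly, the division is legitimate, and both formulas hold. I expect this uniform lower bound---rather than any algebraic step---to be the single place requiring care, and it reduces to the elementary estimate just indicated.
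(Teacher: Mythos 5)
Your proposal is correct and follows essentially the same route as the paper: multiply \eqref{eq:NZero} by $8\pi|y-x|$ and \eqref{eq:NInf} by $4\pi$, subtract so the $\varepsilon\rho_1(y)$ terms cancel, divide by the remaining factor to get $f(y)$, and then recover $\varepsilon\rho_1(y)$ from \eqref{eq:NInf}. The only difference is that you justify explicitly why the divisor $1-\varepsilon\alpha_1(y)+\tfrac12 I(y)$ stays bounded away from zero under the $C^2$-smallness hypothesis, a point the paper's proof leaves implicit.
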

\begin{proof}
Combining \eqref{eq:NZero} with \eqref{eq:NInf}, we get
\[
 8\pi|y-x|N_{0,x}(y) - 4\pi N_{\infty,x}(y)=f(y)\left(1-\varepsilon\alpha_1(y) + \frac{1}{2}\int_{L_{x,y}}\left(1-\frac{|z-x|}{|y-x|}\right)|z-x|\varepsilon\Delta \alpha_1(z)\d s(z)\right)
\]
and the first identity follows. The distortion of the density can be expressed by virtue of \eqref{eq:NInf}.
\end{proof}

\section*{Acknowledgement}
The work of AB and OS has been supported by the Austrian Science Fund (FWF), Project P26687-N25 (Interdisciplinary Coupled Physics Imaging).

%%%%%%%%%%%%%%%%%%%%%%%%%%%%%%
%%% References
%%%%%%%%%%%%%%%%%%%%%%%%%%%%%%
\section*{References}
\renewcommand{\i}{\ii}
\printbibliography[heading=none]
%%% End

%%% end

\end{document}